\newtheorem{theorem}{Theorem}
\newtheorem{lemma}[theorem]{Lemma}
\theoremstyle{definition}
\newtheorem{defi}[theorem]{Definition}
\newtheorem{remark}[theorem]{Remark}
\tikzset{coordinate 3d 30gradi/.style={x={(210:0.544cm)}, y={(1cm,0cm)}, z={(0cm,1cm)}, scale=2} }
\tikzset{<-> /.tip = Stealth[] }
\tikzset{pics/punto/.style={ code={ \fill (0,0) circle [radius=1.5pt]; } } }
\tikzset{figura cubo/.style={scale=0.8} }
\tikzset{
lato/.style={draw, thin},
lato tratteggiato/.style={draw, thin, dashed},
diagonale/.style={draw, thick},
diagonale tratteggiato/.style={draw, thick, dashed},
lato evidenziato/.style={draw, thick},
lato evidenziato tratteggiato/.style={draw, thick, dashed},
freccia incollamento/.style={draw, thin, <->},
}
\newcommand{\letteraincollamento}[1]{\Large{#1}}
\newcommand{\matRP}{\mathbb{RP}}
\newcommand{\ptwoirred}{$\mathbb{P}^2$-irreducible}
\newcommand{\Lthreeone}{L_{3,1}}
\newcommand{\Lfourone}{L_{4,1}}
\newcommand{\co}{\colon\thinspace}
\title{Non-orientable $3$-manifolds\\ of surface-complexity one}
\author{Gennaro {\sc Amendola}\thanks{Department of Theoretical and Applied Sciences, eCampus University, Novedrate, Italy\newline
Member of GNSAGA of INDAM\newline
\miaemail}}
\date{}
\begin{document}
\maketitle

\begin{abstract}
We classify all closed non-orientable \ptwoirred\ 3-manifolds obtained by identifying the faces of a cube.
These turn out to be the closed non-orientable \ptwoirred\ 3-manifolds with surface-complexity one.
We show that they are the four flat ones.
\end{abstract}

\begin{center}
{\small\noindent{\bf Keywords}\\
3-manifold, complexity, cubulation, immersed surface.}
\end{center}

\begin{center}
{\small\noindent{\bf MSC (2020)}\\
57K31 (primary), 57K30 (secondary).}
\end{center}

\section*{Introduction}

The study of closed 3-manifolds constructed by identifying the faces of a cube started with Poincaré~\cite{Poincare} in 1895 to produce examples of manifolds in the study of the fundamental group and of the Betti numbers.
In this paper we will deal with the non-orientable case by starting a classification process.

Non-orientable 3-manifolds seem to be much more sporadic than orientable ones.
For instance, among the 8 three-dimensional geometries, only 5 have non-orientable representatives~\cite{Scott}.
Moreover, among cusped hyperbolic 3-manifolds of Matveev complexity up to nine, only $14045$ of $75956$ are non-orientable, as shown in the Callahan-Hildebrand-Weeks-Thistlethwaite-Burton census~\cite{CaHiWe,Thistlethwaite,Burton:hyperbolic_census_9}.
Also, among closed \ptwoirred\ 3-manifolds of Matveev complexity up to seven, only $8$ of $318$ are non-orientable~\cite{Amendola-Martelli:non_orient_7,Burton:small_non_orient}.
Eventually, all three closed \ptwoirred\ 3-manifolds of surface-complexity zero are orientable~\cite{Amendola:surf_compl}.
Here we show that, among closed \ptwoirred\ 3-manifolds of surface-complexity (and hence cubic-complexity) one, only $4$ of $15$ are non-orientable.

We refer to three different complexities on 3-manifolds, used to carry out the classification processes.
The Matveev complexity was defined in~\cite{Matveev:compl}, and in the cases described above equals the minimum number of tetrahedra needed to triangulate the manifold if it is distinct from the sphere $S^3$, the projective space $\matRP^3$ and the Lens space $\Lthreeone$ (having Matveev complexity zero)~\cite{Martelli-Petronio:decomposition}.
The cubic-complexity is the minimum number of cubes needed to cubulate the manifold (i.e.~to construct the manifold by gluing cubes along the boundary squares)~\cite{Tarkaev:cubic_compl}.
The surface-complexity is the minimum number of triple points needed by the image of a transverse immersion of a closed surface to divide the manifold into balls, and is equal to the cubic-complexity under some hypotheses on the manifold, but it is more flexible~\cite{Amendola:surf_compl,Amendola:surf_compl_cpt}.
For the sake of completeness, we recall that analogous interesting definitions involving surface immersions are the Montesinos complexity and the triple point spectrum, given by Vigara~\cite{Vigara:calculus} and studied by Lozano and Vigara~\cite{Lozano-Vigara:subadditivity,Lozano-Vigara:representing,Lozano-Vigara:triple_point_spectrum}.

In this paper, we classify all closed non-orientable \ptwoirred\ 3-manifolds with surface-complexity (and hence cubic-complexity) one: in some way they are the ``simplest'' ones, because is turns out that they are the four flat ones.

Usually the classification process is computer-aided.
In this case one could study all $8^3=512$ possible gluings for the boundary squares of the cube and identify the object obtained (which may not be a manifold), but we have preferred to use some simple theoretical results to simplify the search among the manifolds with Matveev complexity up to six, avoiding hence the complete enumeration.
We plan for a subsequent paper to continue the classification process with the aid of a computer.

\section{Definitions}

Throughout this paper, all 3-manifolds are assumed to be connected and closed.
By $M$, we will always denote such a (connected and closed) 3-manifold.
Using the {\em Hauptvermutung}, we will freely intermingle the differentiable, piecewise linear and topological viewpoints.

A {\em cubulation} of $M$ is a cell-decomposition of $M$ such that
\begin{itemize}
\item each 2-cell (called a {\em square}) is glued along 4 edges,
\item each 3-cell (called a {\em cube}) is glued along 6 faces arranged like the boundary of a cube.
\end{itemize}
Note that self-adjacencies and multiple adjacencies are allowed.
In the figures we have used (non-symmetric) letters to show the gluing information.
In Fig.~\ref{fig:cubul_example} we have shown a cubulation of the 3-dimensional torus $S^1\times S^1\times S^1$ with one cube.
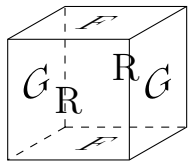
\begin{figure}[t]
  \centerline{
  \begin{tikzpicture}[coordinate 3d 30gradi, figura cubo]
\draw [lato] (1,0,0) -- (1,1,0) -- (0,1,0) -- (0,1,1) -- (0,0,1) -- (1,0,1) -- cycle;
\draw [lato tratteggiato] (0,0,0) -- (0,1,0) (0,0,0) -- (1,0,0) (0,0,0) -- (0,0,1);
\draw [lato] (1,0,1) -- (1,1,1) (1,1,1) -- (0,1,1) (1,1,0) -- (1,1,1);
\draw (0,0.5,0.5) node {\letteraincollamento{R}};
\draw (1,0.5,0.5) node {\letteraincollamento{R}};
\draw (0.5,0,0.5) node[yslant=0.58] {\letteraincollamento{G}};
\draw (0.5,1,0.5) node[yslant=0.58] {\letteraincollamento{G}};
\draw (0.5,0.5,0) node[xslant=1.73, yscale=0.5] {\letteraincollamento{F}};
\draw (0.5,0.5,1) node[xslant=1.73, yscale=0.5] {\letteraincollamento{F}};
\end{tikzpicture}}
  \caption{A cubulation of the 3-dimensional torus $S^1\times S^1\times S^1$ with one cube (the letters show that the identification of each pair of opposite faces is the obvious one, i.e.~the one without twists and reflections).}
  \label{fig:cubul_example}
\end{figure}
\begin{defi}
The {\em cubic-complexity} of $M$ is equal to $c$ if $M$ possesses a cubulation with $c$ cubes and has no cubulation with less than $c$ cubes.
\end{defi}

A more flexible definition, strictly related to cubic-complexity is surface-complexity.
A subset $\Sigma$ of $M$ is said to be a {\em Dehn surface} of $M$ if there exists an abstract (possibly non-connected) closed surface $S$ and a transverse immersion $f\co S\to M$ such that $\Sigma = f(S)$.
By transversality, the number of pre-images of a point of $\Sigma$ under such an $f$ is 1, 2 or 3; so there are three types of points in $\Sigma$, depending on this number (in Fig.~\ref{fig:neigh_Dehn_surf} the regular neighbourhoods, in $M$, of the points are shown); they are called {\em simple}, {\em double} or {\em triple}, respectively.
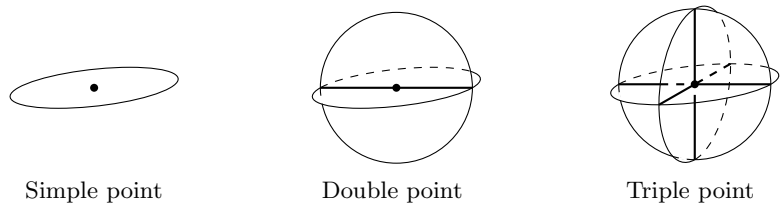
\begin{figure}[t]
\centerline{
\begin{tabular}{ccc}
\begin{minipage}[c]{3.5cm}{\small{\begin{center}
\begin{tikzpicture}[coordinate 3d 30gradi, scale=0.5]
\begin{scope}[canvas is xy plane at z=0]
\draw[lato] (0,0) circle [radius=1];
\end{scope}
\draw (0,0,0) pic {punto};
\end{tikzpicture}
\end{center}}} \end{minipage}
&
\begin{minipage}[c]{3.5cm}{\small{\begin{center}
\begin{tikzpicture}[coordinate 3d 30gradi, scale=0.5]
\begin{scope}[canvas is yz plane at x=0]
\draw[lato] (-1,0) arc [start angle=180, end angle=192.6-360, x radius=1, y radius=1];
\draw[lato tratteggiato] (-1,0) arc [start angle=180, end angle=192.6, x radius=1, y radius=1];
\end{scope}
\begin{scope}[canvas is xy plane at z=0]
\draw[lato] (0,-1) arc [start angle=-90, end angle=143, x radius=1, y radius=1];
\draw[lato tratteggiato] (0,-1) arc [start angle=270, end angle=143, x radius=1, y radius=1];
\end{scope}
\draw[lato evidenziato] (0,-1,0) -- (0,1,0);
\draw (0,0,0) pic {punto};
\end{tikzpicture}
\end{center}}} \end{minipage}
&
\begin{minipage}[c]{3.5cm}{\small{\begin{center}
\begin{tikzpicture}[coordinate 3d 30gradi, scale=0.5]
\begin{scope}[canvas is yz plane at x=0]
\draw[lato] (0,-1) arc [start angle=-90, end angle=180, x radius=1, y radius=1];
\draw[lato] (0,-1) arc [start angle=-90, end angle=-180, x radius=1, y radius=1];
\draw[thick, white] (0,-1) arc [start angle=-90, end angle=-106.8, x radius=1, y radius=1];
\draw[thick, white] (-1,0) arc [start angle=180, end angle=192.6, x radius=1, y radius=1];
\draw[lato tratteggiato] (0,-1) arc [start angle=-90, end angle=-106.8, x radius=1, y radius=1];
\draw[lato tratteggiato] (-1,0) arc [start angle=180, end angle=192.6, x radius=1, y radius=1];
\end{scope}
\begin{scope}[canvas is xy plane at z=0]
\draw[lato] (0,-1) arc [start angle=-90, end angle=143, x radius=1, y radius=1];
\draw[lato tratteggiato] (0,-1) arc [start angle=270, end angle=143, x radius=1, y radius=1];
\end{scope}
\begin{scope}[canvas is xz plane at y=0]
\draw[lato tratteggiato] (0,-1) arc [start angle=270, end angle=127.5, x radius=1, y radius=1];
\draw[lato] (0,-1) arc [start angle=-90, end angle=127.5, x radius=1, y radius=1];
\end{scope}
\draw[lato evidenziato tratteggiato] (-1,0,0) -- (0,0,0);
\draw[lato evidenziato] (0,0,0) -- (1,0,0);
\draw[lato evidenziato] (0,-1,0) -- (0,-0.455,0);
\draw[lato evidenziato tratteggiato] (0,-0.455,0) -- (0,0,0);
\draw[lato evidenziato] (0,0,0) -- (0,1,0);
\draw[lato evidenziato] (0,0,-1) -- (0,0,-0.246);
\draw[lato evidenziato tratteggiato] (0,0,-0.246) -- (0,0,0);
\draw[lato evidenziato] (0,0,0) -- (0,0,1);
\draw (0,0,0) pic {punto};
\end{tikzpicture}
\end{center}}} \end{minipage}
\\
\begin{minipage}[t]{3.5cm}{\small{\begin{center}
Simple point
\end{center}}}\end{minipage} &
\begin{minipage}[t]{3.5cm}{\small{\begin{center}
Double point
\end{center}}}\end{minipage} &
\begin{minipage}[t]{3.5cm}{\small{\begin{center}
Triple point
\end{center}}}\end{minipage}
\end{tabular}}
  \caption{Neighbourhoods of points (marked by thick dots) of a Dehn surface (the lines of double points are drawn thick).}
  \label{fig:neigh_Dehn_surf}
\end{figure}
A Dehn surface $\Sigma$ of $M$ is called {\em quasi-filling} if $M \setminus \Sigma$ is made up of balls.
\begin{defi}
The {\em surface-complexity} of $M$ is equal to $c$ if $M$ possesses a quasi-filling Dehn surface with $c$ triple points and has no quasi-filling Dehn surface with less than $c$ triple points.
\end{defi}

The surface-complexity satisfies some properties, but for the purpose of this paper we recall only the following result, proved in~\cite{Amendola:surf_compl,Amendola:sc1,Korablev-Kazakov:cubic_complexity_two}.
\begin{theorem}\label{theorem:surface_complexity_properties}
The surface-complexity of a (connected and closed) \ptwoirred\ 3-manifold, different from the sphere $S^3$, the projective space $\matRP^3$ and the Lens space $\Lfourone$, is equal to the cubic-complexity of $M$.
The three manifolds $S^3$, $\matRP^3$ and $\Lfourone$ have surface-complexity zero.
There are 11 (connected and closed) \ptwoirred\ orientable 3-manifolds with surface-complexity one, and 80 with surface-complexity two.
\end{theorem}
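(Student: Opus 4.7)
My plan is to address the three claims of the theorem in turn, sketching the ideas behind each of the cited references. First, for the equivalence of the two complexities on the generic class, the bound $\mathrm{sc}(M)\le\mathrm{cc}(M)$ is obtained by converting a cubulation into a Dehn surface. Inside each cube I would draw the three mutually perpendicular ``median squares'' that bisect the cube through its centre; these glue consistently across the face identifications of the cubulation because, on every face of every cube, they cut out the same cross-shaped pattern centred at the face centre. Their union is a transversely immersed closed surface with exactly one triple point per cube, and its complement is the disjoint union of regular neighbourhoods of the vertices of the cubulation, each a ball by the manifold hypothesis. The reverse inequality comes from the dual construction: around each triple point of a quasi-filling Dehn surface the three transverse sheets cut out eight octants, which I would assemble into a single cube, gluing adjacent cubes along the faces corresponding to the double-line arcs between triple points. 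The complementary balls of the Dehn surface become the vertices of the resulting cubulation, yielding $\mathrm{cc}(M)\le\mathrm{sc}(M)$ whenever $\mathrm{sc}(M)\ge 1$.

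Second, the dual construction fails when the Dehn surface has no triple points, so I would enumerate directly the manifolds admitting a quasi-filling Dehn surface whose singular set consists only of disjoint double circles. Such a surface is an immersed closed surface cutting $M$ into balls, and a case analysis based on the topological type of the surface and the number of double circles, together with a fundamental-group computation, singles out exactly $S^3$, $\matRP^3$, and $\Lfourone$. For every other \ptwoirred\ manifold one therefore has $\mathrm{sc}(M)\ge 1$, and the first part yields $\mathrm{sc}(M)=\mathrm{cc}(M)$, as required.

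Third, with the equivalence in hand, classifying closed orientable \ptwoirred\ manifolds with $\mathrm{sc}=1$ reduces to enumerating one-cube cubulations. The $8^3=512$ face-identifications of a cube (three pairs of opposite faces, each glued by one of the eight symmetries of a square) collapse under the action of the $48$-element isometry group of the cube to a short list of representatives. For each representative I would check that the quotient is a manifold by verifying that every vertex link is a $2$-sphere, and then identify it via its fundamental group, its first homology, or by converting to a Matveev triangulation and comparing with the existing census of low-complexity manifolds. Filtering for orientable \ptwoirred\ types gives exactly $11$ manifolds. The $\mathrm{sc}=2$ claim is proved analogously by Korablev and Kazakov, but with two cubes and a substantially larger combinatorial search.

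The principal obstacle is the enumeration, particularly for $\mathrm{sc}=2$: the number of combinatorial types of two-cube cubulations is large enough that a computer-aided identification becomes essential, whereas the one-cube case is still tractable by hand if one exploits the cube symmetries carefully. Within the first part, the subtlest theoretical point is verifying that the dual cell structure really is a cubulation — one must check that around every edge exactly four cubes meet with compatible cyclic identifications, a local condition that depends on the transversality built into the definition of a Dehn surface.
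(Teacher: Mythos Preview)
The paper does not prove this theorem; it is quoted from \cite{Amendola:surf_compl,Amendola:sc1,Korablev-Kazakov:cubic_complexity_two}. Your sketch of those arguments is broadly right for the inequality $\mathrm{sc}(M)\le\mathrm{cc}(M)$ and for the enumeration strategy, but the reverse inequality has a genuine gap, and the subtlety you flag at the end is not the relevant one. Transversality already guarantees the correct local picture around triple points and along double arcs; the obstruction is global. For the dual of a quasi-filling Dehn surface $\Sigma$ to be a cubulation you need the stronger \emph{filling} condition: every component of the double locus minus the triple points must be an arc (not a closed circle), and every component of $\Sigma$ minus the double locus must be an open disc. A minimal quasi-filling $\Sigma$ need not satisfy this a priori, and the substance of \cite{Amendola:surf_compl} is precisely the proof that it does whenever $M$ is \ptwoirred\ and distinct from $S^3$, $\matRP^3$, $\Lfourone$. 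That argument uses irreducibility and $\matRP^2$-irreducibility to compress or surger the offending pieces without raising the number of triple points, and it is exactly here that the three exceptional manifolds emerge --- not from a separate enumeration of triple-point-free surfaces as in your second paragraph. Without this step your dual construction simply does not yield a cell complex whose $2$-cells are squares, and $\mathrm{cc}(M)\le\mathrm{sc}(M)$ remains unproved.

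A smaller point on the enumeration: you restrict to ``three pairs of opposite faces''. A one-cube cubulation may pair adjacent faces --- the fourth flat manifold in Table~\ref{tab:cubul_flat} does exactly this --- so the $15$ perfect matchings of the six faces must all be considered before quotienting by the symmetry group of the cube.
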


For the sake of completeness, we mention that the three manifolds $S^3$, $\matRP^3$ and $\Lfourone$ have cubic-complexity one~\cite{Korablev-Kazakov:cubic_complexity_two}.

\section{The classification}

The main result of this paper is the following.
\begin{theorem}\label{theorem:sc1_nonor}
There are 4 (connected and closed) \ptwoirred\ non-orientable 3-manifolds with surface-complexity one (and hence with cubic-complexity one): they are the four flat ones.
\end{theorem}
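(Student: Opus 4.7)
The plan is to replace the brute-force enumeration of all $8^3 = 512$ face-identifications of the cube by a combination of a Matveev-complexity bound and direct verification for the four flat manifolds. By Theorem~\ref{theorem:surface_complexity_properties}, non-orientable \ptwoirred\ 3-manifolds with surface-complexity one are exactly those with cubic-complexity one, i.e.\ those admitting a cubulation with a single cube. A single cube can be split into six tetrahedra without introducing new vertices (coning over an interior diagonal, for example), so the resulting one-vertex-class triangulation and its dual spine give a uniform bound on the Matveev complexity of any such $M$: every candidate has Matveev complexity at most six. This reduces the problem to the finite and very short list of non-orientable closed \ptwoirred\ 3-manifolds with Matveev complexity at most six provided by the censuses in~\cite{Amendola-Martelli:non_orient_7, Burton:small_non_orient} (at most the eight manifolds of complexity $\le 7$ mentioned in the introduction).

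I would then exhibit, for each of the four closed non-orientable flat 3-manifolds, an explicit cubulation with one cube. Each such manifold is a quotient of $\matR^3$ by a Bieberbach group preserving a standard cubical tessellation and having a cube as fundamental domain, so the identification of the three pairs of opposite faces (by identity, reflection, or half-turn) can be drawn in the style of Fig.~\ref{fig:cubul_example}. For each presentation one verifies that the quotient is a manifold by checking that the edge-cycles close up correctly and that the vertex links are $2$-spheres, and that the orientation cocycle is non-trivial.

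The main obstacle is the converse: showing that none of the remaining manifolds in the short census list can be cubulated by a single cube. My strategy would be, for each residual candidate $N$, to compute $H_1(N)$, the rank of $\pi_1(N)$, and the underlying geometry, and to contrast these with the invariants that can arise from a one-cube cubulation. Because a one-cube complex has only three $2$-cells and the $8$ boundary identifications of each pair are highly constrained by the requirement that the vertex links be spherical, the first homology and the presentation of $\pi_1$ admit only a small combinatorial range of possibilities; in particular, a geometric structure must be flat, which already eliminates the non-flat candidates in the list. The delicate part is the bookkeeping for the edge-equivalence classes under the face identifications, since two different pairings of the cube's faces can yield homeomorphic manifolds, and it is here that the symmetries of the cube and the ``simple theoretical results'' alluded to in the introduction are used to keep the case analysis finite and purely topological, avoiding the full $512$-way enumeration.
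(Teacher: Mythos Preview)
Your overall plan mirrors the paper's: reduce via a Matveev-complexity bound to the census of non-orientable \ptwoirred\ manifolds of complexity at most~$6$, exhibit one-cube cubulations of the four flat manifolds, and exclude whatever remains. Two points need attention.

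First, the bound itself. Splitting the cube into six tetrahedra only descends to a triangulation of $M$ if the induced diagonal on each square matches the diagonal on the square it is glued to; for a fixed decomposition (your ``coning over an interior diagonal'') this can fail. The paper addresses exactly this in Lemma~\ref{lemma:four_blocks} by producing four blocks (of $5$ or $6$ tetrahedra) whose diagonal patterns together cover every compatibility type of face-pairing, so that some block always descends to a genuine triangulation with at most six tetrahedra. Your sketch elides this matching problem; it is repairable, but as written the bound is not established.

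Second, and this is the genuine gap, the exclusion step. By Remark~\ref{rem:compl_6_no_or} the complexity-$6$ census contains exactly five manifolds: the four flat ones and the Sol torus bundle with monodromy $\begin{psmallmatrix}1&1\\1&0\end{psmallmatrix}$. Your assertion that ``a geometric structure must be flat'' is precisely the statement to be proved, and it is false as a general principle about one-cube \ptwoirred\ manifolds: in the orientable case there are eleven of them (Theorem~\ref{theorem:surface_complexity_properties}) and most are not flat. Neither $H_1$ nor the rank of $\pi_1$ obviously separates the Sol bundle from every possible one-cube quotient, so the invariants you list do not settle the question without effectively reverting to the enumeration you set out to avoid. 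The paper's argument is concrete and combinatorial: every triangulation arising from one of the three $6$-tetrahedron blocks necessarily has an edge of valence~$4$ (the internal edge for the flipped and $4$-valent blocks; a glued pair of valence-$2$ diagonals for the $5$-valent block), whereas the unique minimal six-tetrahedron triangulation of the Sol bundle, as listed in~\cite{Burton:small_non_orient}, has no valence-$4$ edge. This edge-valence obstruction is the substantive idea your proposal is missing.
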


The definitions for the flat and the Sol geometries (the latter being mentioned below) and the relations with Seifert fibrations can be found in~\cite{Scott}.
We just recall that each of the four flat manifolds has three Seifert fibrations, which can be easily visualised by means of the cubuluation (see Table~\ref{tab:cubul_flat} below).

\paragraph{Matveev complexity and triangulations}
In the proof we will use the {\em Matveev complexity} of $M$, defined in~\cite{Matveev:compl}.
We do not need all details (see~\cite{Matveev:book} for a comprehensive treatise); we only need the fact that for a \ptwoirred\ $M$ distinct from $S^3, \matRP^3, L_{3,1}$ the Matveev complexity equals the minimum number of tetrahedra needed to triangulate $M$.
In~\cite{Amendola-Martelli:small_non_orient,Amendola-Martelli:non_orient_7,Burton:small_non_orient} the list of non-orientable \ptwoirred\ 3-manifolds with Matveev complexity at most $7$ is given.
\begin{remark}\label{rem:compl_6_no_or}
For the aim of this paper, we need only the list up to complexity 6: i.e.
\begin{itemize}
\item
no non-orientable \ptwoirred\ 3-manifold has Matveev complexity less than 6, and
\item
the non-orientable \ptwoirred\ 3-manifolds with Matveev complexity 6 are the four flat ones and the torus bundle with monodromy $\begin{psmallmatrix} 1 & 1 \\ 1 & 0 \end{psmallmatrix}$, which is a Sol manifold.
\end{itemize}
\end{remark}

\paragraph{From cubulations to triangulations}
Triangulations and cubulations are related to each other.
There are a few ways to obtain a triangulation from a cubulation of a manifold.
A first simple construction is shown in~\cite{Amendola:surf_compl}, while a cheeper one is shown in~\cite{Tarkaev:cubic_compl}.
If we start from a cubulation with $c$ cubes, the former construction leads to a triangulation with a number of tetrahedra between $5c$ and $8c$, while the latter to a triangulation with exactly $6c$ tetrahedra.
We will need a finer analysis of the triangulation obtained if we start from a cubulation with one cube, and we will use the ideas of both constructions.

Let us start from a cubulation of $M$ with one cube.
It can be constructed by starting from the abstract cube and by identifying the boundary squares in pairs.
Consider now a triangulation of the abstract cube such that the induced triangulation of each boundary square is composed of two triangles: we will call such a triangulation a {\em block}.
In each square the triangulation is unambiguously defined by the diagonal that is the common edge of the two triangles.
We will call the set of these diagonals a {\em diagonal pattern}.
When we identify two squares to get $M$, either the diagonal (and hence the two triangles) match or not.
If the three pairs of diagonals match, we get a triangulation of $M$.
Otherwise, we will change the block.

We will use the four blocks that are described in Table~\ref{tab:blocks}.
\begin{table}[t]
  \centerline{
\begin{tabular}{llcc}
\toprule
Name & Description & \parbox[t]{2cm}{\begin{center}Number of tetrahedra\end{center}} & Picture \\[-6pt]
\midrule
\parbox[c][2.5cm]{2.1cm}{\em 5-tetrahedron\\ block} & \parbox[c]{3cm}{Four tetrahedra glued along the faces of a central one} & 5 &
\parbox[c]{4.5cm}{\centerline{\begin{tikzpicture}[coordinate 3d 30gradi, figura cubo]
\draw [lato] (1,0,0) -- (1,1,0) -- (0,1,0) -- (0,1,1) -- (0,0,1) -- (1,0,1) -- cycle;
\draw [lato tratteggiato] (0,0,0) -- (0,1,0) (0,0,0) -- (1,0,0) (0,0,0) -- (0,0,1);
\draw [lato] (1,0,1) -- (1,1,1) (1,1,1) -- (0,1,1) (1,1,0) -- (1,1,1);
\draw [lato] (0,0,1) -- (1,1,1) (0,1,0) -- (1,1,1) (1,0,0) -- (1,1,1);
\draw [lato tratteggiato] (0,0,1) -- (0,1,0) (1,0,0) -- (0,1,0) (1,0,0) -- (0,0,1);
\end{tikzpicture}}}
\\
\parbox[c][2.5cm]{2.1cm}{\em flipped\\ block} & \parbox[c]{3cm}{Obtained from the previous one by gluing a tetrahedron along the two triangles of a square} & 6 &
\parbox[c]{4.5cm}{\centerline{\begin{tikzpicture}[coordinate 3d 30gradi, figura cubo]
\draw [lato] (1,0,0) -- (1,1,0) -- (0,1,0) -- (0,1,1) -- (0,0,1) -- (1,0,1) -- cycle;
\draw [lato tratteggiato] (0,0,0) -- (0,1,0) (0,0,0) -- (1,0,0) (0,0,0) -- (0,0,1);
\draw [lato] (1,0,1) -- (1,1,1) (1,1,1) -- (0,1,1) (1,1,0) -- (1,1,1);
\draw [lato] (0,0,1) -- (1,1,1) (0,1,0) -- (1,1,1) (1,0,0) -- (1,1,1);
\draw [lato tratteggiato] (0,0,1) -- (0,1,0) (1,0,0) -- (0,1,0) (1,0,0) -- (0,0,1);
\begin{scope}[shift={(0,-0.5,0)}]
\draw [lato] (1,0,0) -- (0,0,1) -- (1,-0.8,1) -- (0,-0.8,0) -- cycle;
\draw [lato] (1,0,0) -- (1,-0.8,1);
\draw [lato tratteggiato] (0,0,1) -- (0,-0.8,0);
\end{scope}
\draw [freccia incollamento] (0.33,-0.27-0.5,0.33) -- (0.33,0,0.33);
\draw [freccia incollamento] (0.66,-0.27-0.5,0.66) -- (0.66,0,0.66);
\end{tikzpicture}}}
\\
\parbox[c][3cm]{2.1cm}{\em 5-valent\\ block} &\parbox[c]{3cm}{Obtained from the star of a 5-valent edge by gluing a tetrahedron along one of the triangles of the boundary} & 6 &
\parbox[c]{4.5cm}{\centerline{\begin{tikzpicture}[coordinate 3d 30gradi, figura cubo]
\draw [lato] (1,0,0) -- (1,1,0) -- (0,1,0) -- (1,1,1) -- (0,0,1) -- (1,0,1) -- cycle;
\draw [lato tratteggiato] (0,0,0) -- (0,1,0) (0,0,0) -- (1,0,0) (0,0,0) -- (0,0,1);
\draw [lato] (1,1,0) -- (1,1,1) (1,1,1) -- (1,0,1) (1,0,1) -- (1,1,0);
\draw [lato tratteggiato] (1,0,0) -- (0,1,0) (0,0,0) -- (1,0,1) (0,0,1) -- (0,1,0);
\draw [lato evidenziato tratteggiato] (1,0,1) -- (0,1,0);
\begin{scope}[shift={(0,0.5,0)}]
\draw [lato] (0,0,1) -- (0,1,0) -- (0,1,1) -- cycle;
\draw [lato] (0,0,1) -- (1,1,1) -- (0,1,0) (1,1,1) -- (0,1,1);
\end{scope}
\draw [freccia incollamento] (0.33,0.66+0.5,0.66) -- (0.33,0.66,0.66);
\end{tikzpicture}}}
\\
\parbox[c][3.5cm]{2.1cm}{\em 4-valent\\ block} &\parbox[c]{3cm}{Obtained from the star of a $4$-valent edge (which is an octahedron) by gluing two tetrahedra along two opposite triangles of the boundary} & 6 &
\parbox[c]{4.5cm}{\centerline{\begin{tikzpicture}[coordinate 3d 30gradi, figura cubo]
\draw [lato] (0,0,0) -- (1,1,0) -- (0,1,0) -- (1,1,1) -- (0,0,1) -- (1,0,1) -- cycle;
\draw [lato tratteggiato] (0,0,0) -- (0,0,1) (0,0,0) -- (0,1,0);
\draw [lato] (1,1,0) -- (1,1,1) (1,1,1) -- (1,0,1);
\draw [lato] (1,0,1) -- (1,1,0);
\draw [lato tratteggiato] (0,0,1) -- (0,1,0);
\draw [lato evidenziato tratteggiato] (1,0,1) -- (0,1,0);
\begin{scope}[shift={(0,0.5,0)}]
\draw [lato] (0,0,1) -- (0,1,0) -- (0,1,1) -- cycle;
\draw [lato] (0,0,1) -- (1,1,1) -- (0,1,0) (1,1,1) -- (0,1,1);
\end{scope}
\draw [freccia incollamento] (0.33,0.66+0.5,0.66) -- (0.33,0.66,0.66);
\begin{scope}[shift={(0,-0.5,-0)}]
\draw [lato] (1,0,1) -- (1,1,0) -- (1,0,0) -- cycle;
\draw [lato tratteggiato] (0,0,0) -- (1,0,1) (0,0,0) -- (1,1,0) (0,0,0) -- (1,0,0);
\end{scope}
\draw [freccia incollamento] (0.66,0.33-0.5,0.33) -- (0.66,0.33,0.33);
\end{tikzpicture}}}
\\
\bottomrule
\end{tabular}}
  \caption{Four blocks.}
  \label{tab:blocks}
\end{table}
Note that the name chosen for the flipped block underlines that a diagonal is flipped with respect to the 5-tetrahedron block; however, like the 4-valent block, the flipped block also has a 4-valent internal edge (``the flipped diagonal''), the star of the 4-valent internal edge is an octrahedron, and the block is obtained by gluing two tetrahedra to the octahedron (but not along two opposite triangles).
Note also that the number of tetrahedra of the four blocks is at most 6.

\begin{lemma}\label{lemma:four_blocks}
Each 3-manifold with a cubulation with one cube has a triangulation obtained by gluing the squares of one of the four blocks described above.
\end{lemma}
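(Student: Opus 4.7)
The plan is to case-split on how the three pairs of face identifications interact with the inscribed regular tetrahedra $T_1,T_2$ of the abstract cube, and to match each case with one of the four blocks.

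Recall that the 5-tetrahedron block's boundary carries the $T_1$-\emph{pattern}: on each face the diagonal joins the two vertices of $T_1$ on that face. For any block, its triangulation descends to $M$ precisely when, on each identified pair of faces, the gluing isometry sends the chosen diagonal of one face to the chosen diagonal of the other. Call a pair \emph{coherent} if the gluing sends $T_1$-diagonals to $T_1$-diagonals, and \emph{twisted} otherwise (in which case $T_1$-diagonals are sent to $T_2$-diagonals); let $k\in\{0,1,2,3\}$ be the number of twisted pairs. When $k=0$, the 5-tetrahedron block applies directly. When $k=1$, flipping the diagonal on one face of the unique twisted pair (from $T_1$ to $T_2$) yields the flipped block's pattern, up to a cube symmetry carrying the flipped face to the required one (the cube symmetry group acts transitively on the six faces).

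When $k=2$, we flip one face per twisted pair, giving $2\times 2=4$ possible configurations of the two flipped faces. The claim is that at least one configuration places the two flipped faces adjacent in the cube: this follows because a cube face has a unique opposite, so among the four cross-selections between two disjoint face-pairs at most two can be opposite. Since the cube symmetry group acts transitively on adjacent face-pairs, the resulting pattern is that of the 5-valent block. When $k=3$, we flip one face per pair and must arrange the three flipped faces to share a common vertex. Writing the cube vertices as $\mathbb{F}_2^3$, the condition that a vertex $v$ be incident to all three flipped faces becomes, for each twisted pair consisting of two faces on two different axes, a linear equation of the form $v_{a_1}+v_{a_2}\equiv c\pmod 2$; opposite-face pairs impose no condition. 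A short inspection of the three pairing types (three opposite pairs; one opposite pair plus two adjacent pairs; three adjacent pairs) shows these equations are always linearly dependent with consistent right-hand sides, so the solution set is non-empty; this provides the required choice and yields the 4-valent block's pattern.

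The main obstacle is the combinatorial verification in the cases $k=2$ and $k=3$, namely arguing that the diagonals which \emph{must} be flipped in order to respect the identifications can always be chosen to lie on mutually adjacent faces (for $k=2$) or on three faces sharing a common vertex (for $k=3$). Once the configuration is secured, matching the resulting boundary pattern with the explicit block of Table~\ref{tab:blocks} is a routine check using a suitable cube symmetry.
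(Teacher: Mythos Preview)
Your approach is essentially the paper's: case-split on the number $k$ of mismatched pairs and exhibit the corresponding block. Your $\mathbb{F}_2^3$ argument for $k=3$ is a pleasant way of making explicit what the paper calls ``easy to prove''; it is correct once one observes that three faces share a vertex $v$ iff they are the three coordinate faces through $v$, so each adjacent pair contributes the affine condition $v_i+v_j\equiv 1+\epsilon_i+\epsilon_j$.

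There is, however, a genuine gap in the $k=3$ case. Having found a vertex $v$ incident to one face of each pair, you assert that flipping those three diagonals ``yields the 4-valent block's pattern''. This is true only when $v\in T_1$. If $v\in T_2$, the resulting diagonal pattern has three diagonals through $v$ and three through the antipode $-v$; its vertex--diagonal incidence degrees are $(3,3,1,1,1,1,1,1)$, whereas the 4-valent block's pattern has degrees $(2,2,2,2,2,2,0,0)$. No cube symmetry relates these, so the ``routine check'' you defer to would actually fail. (The $v\in T_2$ pattern is the boundary pattern of the Freudenthal/Kuhn six-tetrahedron triangulation, which is \emph{not} among the four blocks of the lemma.)

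The fix is immediate and is precisely the dichotomy the paper spells out: your linear system is invariant under $v\mapsto v+(1,1,1)$, so solutions come in antipodal pairs, and exactly one member of each pair lies in $T_1$. Choosing that one gives the 4-valent block; equivalently, if your first-found $v$ lies in $T_2$, flip the \emph{other} three diagonals (those on the faces through $-v\in T_1$). You should add this one sentence to close the argument.
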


\begin{proof}
Consider a cubulation of a 3-manifold $M$ with one cube.
Identify the cube with the $5$-tetrahedron block and glue the squares to get $M$.
The gluing pairs the six boundary squares into three pairs (in each of which the two squares are identified to each other).
If we consider the diagonal pattern given by the $5$-tetrahedron block (Fig.~\ref{fig:5tetra_pattern}), the pairing is inherited by the diagonal pattern.
\begin{figure}[t]
  \centerline{
\begin{tikzpicture}[coordinate 3d 30gradi, figura cubo]
\draw [lato] (1,0,0) -- (1,1,0) -- (0,1,0) -- (0,1,1) -- (0,0,1) -- (1,0,1) -- cycle;
\draw [lato tratteggiato] (0,0,0) -- (0,1,0) (0,0,0) -- (1,0,0) (0,0,0) -- (0,0,1);
\draw [lato] (1,0,1) -- (1,1,1) (1,1,1) -- (0,1,1) (1,1,0) -- (1,1,1);
\draw [diagonale] (0,0,1) -- (1,1,1) (0,1,0) -- (1,1,1) (1,0,0) -- (1,1,1);
\draw [diagonale tratteggiato] (0,0,1) -- (0,1,0) (1,0,0) -- (0,1,0) (1,0,0) -- (0,0,1);
\end{tikzpicture}
  }
  \caption{The diagonal pattern of the $5$-tetrahedron block.}
  \label{fig:5tetra_pattern}
\end{figure}
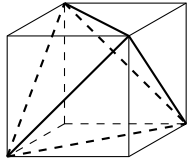
If the three pairs of the diagonals match, we get a triangulation of $M$.
Otherwise, one, two or three of them do not match.

If one pair of diagonals does not match, we consider the flipped block (i.e.~we add a tetrahedron) in order to change one of the two non-matching diagonals, getting a diagonal pattern (shown, up to symmetry, in Fig.~\ref{fig:non-matching_patterns}-a) whose pairs of diagonals match, and hence getting a triangulation of $M$.
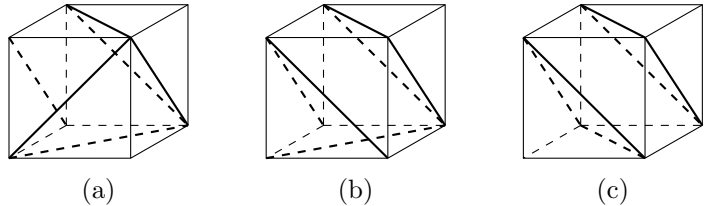
\begin{figure}[t]
  \centerline{
\begin{tabular}{c@{\hspace*{1cm}}c@{\hspace*{1cm}}c}
\begin{tikzpicture}[coordinate 3d 30gradi, figura cubo]
\draw [lato] (1,0,0) -- (1,1,0) -- (0,1,0) -- (0,1,1) -- (0,0,1) -- (1,0,1) -- cycle;
\draw [lato tratteggiato] (0,0,0) -- (0,1,0) (0,0,0) -- (1,0,0) (0,0,0) -- (0,0,1);
\draw [lato] (1,0,1) -- (1,1,1) (1,1,1) -- (0,1,1) (1,1,0) -- (1,1,1);
\draw [diagonale] (0,0,1) -- (1,1,1) (0,1,0) -- (1,1,1) (1,0,0) -- (1,1,1);
\draw [diagonale tratteggiato] (0,0,1) -- (0,1,0) (1,0,0) -- (0,1,0) (1,0,1) -- (0,0,0);
\end{tikzpicture}
&
\begin{tikzpicture}[coordinate 3d 30gradi, figura cubo]
\draw [lato] (1,0,0) -- (1,1,0) -- (0,1,0) -- (0,1,1) -- (0,0,1) -- (1,0,1) -- cycle;
\draw [lato tratteggiato] (0,0,0) -- (0,1,0) (0,0,0) -- (1,0,0) (0,0,0) -- (0,0,1);
\draw [lato] (1,0,1) -- (1,1,1) (1,1,1) -- (0,1,1) (1,1,0) -- (1,1,1);
\draw [diagonale] (0,0,1) -- (1,1,1) (0,1,0) -- (1,1,1) (1,0,1) -- (1,1,0);
\draw [diagonale tratteggiato] (0,0,1) -- (0,1,0) (1,0,0) -- (0,1,0) (1,0,1) -- (0,0,0);
\end{tikzpicture}
&
\begin{tikzpicture}[coordinate 3d 30gradi, figura cubo]
\draw [lato] (1,0,0) -- (1,1,0) -- (0,1,0) -- (0,1,1) -- (0,0,1) -- (1,0,1) -- cycle;
\draw [lato tratteggiato] (0,0,0) -- (0,1,0) (0,0,0) -- (1,0,0) (0,0,0) -- (0,0,1);
\draw [lato] (1,0,1) -- (1,1,1) (1,1,1) -- (0,1,1) (1,1,0) -- (1,1,1);
\draw [diagonale] (0,0,1) -- (1,1,1) (0,1,0) -- (1,1,1) (1,0,1) -- (1,1,0);
\draw [diagonale tratteggiato] (0,0,1) -- (0,1,0) (0,0,0) -- (1,1,0) (1,0,1) -- (0,0,0);
\end{tikzpicture}
\\
(a) & (b) & (c)
\end{tabular}
  }
  \caption{The diagonal pattern if one (a), two (b) or three (c) pairs of diagonals do not match.}
  \label{fig:non-matching_patterns}
\end{figure}

If two pairs of diagonals do not match, it is easy to prove that there are two diagonals, one for each non-matching pair, in adjacent squares.
If we change these two diagonals, we get (up to symmetry) the diagonal pattern shown in Fig.~\ref{fig:non-matching_patterns}-b.
Therefore, if we consider the $5$-valent block, whose pairs of diagonals match, we get a triangulation of $M$.

Finally, if all of the three pairs of diagonals do not match, it is easy to prove that there are three diagonals, one for each non-matching pair, in squares that share a vertex of the cube.
There are two possibilities: either the vertex belong to all of the three diagonals, or it does not belong to any of them.
In the former case we change the three diagonals, in the latter case we change the other three diagonals.
In both cases we get (up to symmetry) the diagonal pattern shown in Fig.~\ref{fig:non-matching_patterns}-c.
Therefore, if we consider the $4$-valent block, whose pairs of diagonals match, we get a triangulation of $M$.

In all cases, we have got a triangulation obtained by gluing the squares of one of the four blocks described above, and the proof is complete.
\end{proof}

\paragraph{Proof of Theorem~\ref{theorem:sc1_nonor}}
We can now prove the main result of the paper.
\begin{proof}
First of all, we note, by means of Theorem~\ref{theorem:surface_complexity_properties}, that the surface-complexity of a non-orientable \ptwoirred\ 3-manifold equals the cubic-complexity, because there is no such manifold with surface-complexity or cubic-complexity zero.

Consider then a non-orientable \ptwoirred\ 3-manifold $M$ with surface-complexity one.
Since $M$ has a cubulation with one cube, by Lemma~\ref{lemma:four_blocks}, we have that the Matveev complexity of $M$ is at most 6, so, by Remark~\ref{rem:compl_6_no_or}, $M$ is either one of the four flat manifolds or the torus bundle with monodromy $\begin{psmallmatrix} 1 & 1 \\ 1 & 0 \end{psmallmatrix}$.
In Table~\ref{tab:cubul_flat} we have shown a cubulation of the four flat manifolds with one cube, so they have cubic-complexity (and surface-complexity) one.
\begin{table}[t]
  \centerline{
\begin{tabular}{llc}
\toprule
Burton notation~\cite{Burton:small_non_orient} &
Regina's notation~\cite{Burton_et_al:regina} &
Cubulation \\
\midrule
$K^2\times S^1$ & 
\parbox[c][2cm]{4.5cm}{\texttt{KB\ x\ S1}\\ \texttt{A=\ x\ S1}\\ \texttt{T\ x\detokenize{~} S1} } &
\parbox[c][2.5cm]{2.5cm}{\centerline{\begin{tikzpicture}[coordinate 3d 30gradi, figura cubo]
\draw [lato] (1,0,0) -- (1,1,0) -- (0,1,0) -- (0,1,1) -- (0,0,1) -- (1,0,1) -- cycle;
\draw [lato tratteggiato] (0,0,0) -- (0,1,0) (0,0,0) -- (1,0,0) (0,0,0) -- (0,0,1);
\draw [lato] (1,0,1) -- (1,1,1) (1,1,1) -- (0,1,1) (1,1,0) -- (1,1,1);
\draw (0,0.5,0.5) node[xscale=-1] {\letteraincollamento{R}};
\draw (1,0.5,0.5) node {\letteraincollamento{R}};
\draw (0.5,0,0.5) node[yslant=0.58] {\letteraincollamento{G}};
\draw (0.5,1,0.5) node[yslant=0.58] {\letteraincollamento{G}};
\draw (0.5,0.5,0) node[xslant=1.73, yscale=0.5] {\letteraincollamento{F}};
\draw (0.5,0.5,1) node[xslant=1.73, yscale=0.5] {\letteraincollamento{F}};
\end{tikzpicture}}}
\\
\midrule
$T^2\times I/_{\begin{psmallmatrix} 0&1\\ 1&0 \end{psmallmatrix}}$ &
\parbox[c][2cm]{4.5cm}{\texttt{SFS\ [KB:\ (1,1)]} \\ \texttt{M\detokenize{_} x\ S1} \\ \texttt{SFS\ [T/o2:\ (1,1)]} } &
\parbox[c][2.5cm]{2.5cm}{\centerline{\begin{tikzpicture}[coordinate 3d 30gradi, figura cubo]
\draw [lato] (1,0,0) -- (1,1,0) -- (0,1,0) -- (0,1,1) -- (0,0,1) -- (1,0,1) -- cycle;
\draw [lato tratteggiato] (0,0,0) -- (0,1,0) (0,0,0) -- (1,0,0) (0,0,0) -- (0,0,1);
\draw [lato] (1,0,1) -- (1,1,1) (1,1,1) -- (0,1,1) (1,1,0) -- (1,1,1);
\draw (0,0.5,0.5) node {\letteraincollamento{R}};
\draw (1,0.5,0.5) node {\letteraincollamento{R}};
\draw (0.5,0,0.5) node[yslant=0.58] {\letteraincollamento{G}};
\draw (0.5,1,0.5) node[yslant=0.58] {\letteraincollamento{G}};
\draw (0.5,0.5,0) node[xslant=1.73, yscale=0.5] {\letteraincollamento{F}};
\draw (0.5,0.5,1) node[xslant=1.73, yscale=0.5, xscale=-1, rotate=-90] {\letteraincollamento{F}};
\end{tikzpicture}}}
\\
\midrule
$K^2\times I/_{\begin{psmallmatrix} 1&0\\ 0&-1 \end{psmallmatrix}}$ &
\parbox[c][2cm]{4.5cm}{\texttt{KB/n3\ x\detokenize{~} S1} \\ \texttt{A=/o2\ x\detokenize{~} S1} \\ \texttt{SFS\ [D\detokenize{_}:\ (2,1)\ (2,1)]} } &
\parbox[c][2.5cm]{2.5cm}{\centerline{\begin{tikzpicture}[coordinate 3d 30gradi, figura cubo]
\draw [lato] (1,0,0) -- (1,1,0) -- (0,1,0) -- (0,1,1) -- (0,0,1) -- (1,0,1) -- cycle;
\draw [lato tratteggiato] (0,0,0) -- (0,1,0) (0,0,0) -- (1,0,0) (0,0,0) -- (0,0,1);
\draw [lato] (1,0,1) -- (1,1,1) (1,1,1) -- (0,1,1) (1,1,0) -- (1,1,1);
\draw (0,0.5,0.5) node[xscale=-1] {\letteraincollamento{R}};
\draw (1,0.5,0.5) node {\letteraincollamento{R}};
\draw (0.5,0,0.5) node[yslant=0.58] {\letteraincollamento{G}};
\draw (0.5,1,0.5) node[yslant=0.58] {\letteraincollamento{G}};
\draw (0.5,0.5,0) node[xslant=1.73, yscale=0.5] {\letteraincollamento{F}};
\draw (0.5,0.5,1) node[xslant=1.73, yscale=0.5, yscale=-1] {\letteraincollamento{F}};
\end{tikzpicture}}}
\\
\midrule
$K^2\times I/_{\begin{psmallmatrix} -1&1\\ 0&-1 \end{psmallmatrix}}$ &
\parbox[c][2cm]{4.5cm}{\texttt{SFS\ [KB/n3:\ (1,1)]} \\ \texttt{M\detokenize{_}/n2\ x\detokenize{~} S1} \\ \texttt{SFS\ [RP2:\ (2,1)\ (2,1)]} } &
\parbox[c][2.5cm]{2.5cm}{\centerline{\begin{tikzpicture}[coordinate 3d 30gradi, figura cubo]
\draw [lato] (1,0,0) -- (1,1,0) -- (0,1,0) -- (0,1,1) -- (0,0,1) -- (1,0,1) -- cycle;
\draw [lato tratteggiato] (0,0,0) -- (0,1,0) (0,0,0) -- (1,0,0) (0,0,0) -- (0,0,1);
\draw [lato] (1,0,1) -- (1,1,1) (1,1,1) -- (0,1,1) (1,1,0) -- (1,1,1);
\draw (0,0.5,0.5) node[rotate=180] {\letteraincollamento{R}};
\draw (1,0.5,0.5) node[rotate=180] {\letteraincollamento{G}};
\draw (0.5,0,0.5) node[yslant=0.58] {\letteraincollamento{G}};
\draw (0.5,1,0.5) node[yslant=0.58] {\letteraincollamento{R}};
\draw (0.5,0.5,0) node[xslant=1.73, yscale=0.5] {\letteraincollamento{F}};
\draw (0.5,0.5,1) node[xslant=1.73, yscale=0.5, xscale=-1, rotate=-90] {\letteraincollamento{F}};
\end{tikzpicture}}}
\\
\bottomrule
\end{tabular}}
  \caption{A cubulation of the four flat manifolds.}
  \label{tab:cubul_flat}
\end{table}

Instead, the torus bundle with monodromy $\begin{psmallmatrix} 1 & 1 \\ 1 & 0 \end{psmallmatrix}$ does not have a cubulation with one cube, so it has cubic-complexity (and hence surface-complexity) greater than one.
In order to prove this (and hence to conclude the proof), we suppose by contradiction that a cubulation with one cube exists.
By Lemma~\ref{lemma:four_blocks} there exists a triangulation obtained by gluing the squares of one of the four blocks shown in Table~\ref{tab:blocks}.
First of all we can rule out the $5$-tetrahedron block because the manifold has complexity $6$ (see Remark~\ref{rem:compl_6_no_or}).
In order to rule out the other three blocks, we will analyse the valences of the edges of the triangulations that can be obtained by means of them.
The valences of the internal edge and of the edges corresponding to the diagonals in the three blocks are listed in Table~\ref{tab:valences}.
\begin{table}[t]
\centerline{\begin{tabular}{cp{3cm}p{3cm}}
\toprule
Block & \centerline{Valence of the}\centerline{internal edge} & \centerline{Valences of the}\centerline{diagonal edges} \\[-10pt]
\midrule
flipped block & \centerline{4} & \centerline{1, 3, 3, 3, 3, 3} \\[-10pt]
$5$-valent block & \centerline{5} & \centerline{2, 2, 2, 2, 3, 3} \\[-10pt]
$4$-valent block & \centerline{4} & \centerline{2, 2, 3, 3, 3, 3} \\[-10pt]
\bottomrule
\end{tabular}}
  \caption{The valences of the internal and diagonal edges of the three blocks.}
  \label{tab:valences}
\end{table}
In each triangulation obtained with these three blocks there is at least one edge with valence $4$: the internal one in the case of the flipped block and in the case of the the $4$-valent block, and the edge corresponding to a diagonal in the case of the $5$-valent block (indeed two of the four diagonals whose corresponding edge has valance 2 must be glued together).
In~\cite{Burton:small_non_orient} the unique triangulation of the torus bundle with monodromy $\begin{psmallmatrix} 1 & 1 \\ 1 & 0 \end{psmallmatrix}$ is shown; for the sake of the clarity, we mention that, as a matter of fact, the matrix used is $\begin{psmallmatrix} -1 & 1 \\ 1 & 0 \end{psmallmatrix}$, but the resulting manifold is the same (as shown in Corollary~A.6 of~\cite{Amendola-Martelli:non_orient_7}).
It has no edge with valence $4$ (see also~\cite{Burton_et_al:regina}), so we have got a contradiction and the theorem is proved.
\end{proof}

\begin{remark}
The proof that the torus bundle with monodromy $\begin{psmallmatrix} 1 & 1 \\ 1 & 0 \end{psmallmatrix}$ has not complexity one can be also given by means of the orientable census of manifolds with cubic-complexity at most two, given in~\cite{Korablev-Kazakov:cubic_complexity_two}.
Indeed a cubulation with one cube of the torus bundle with monodromy $\begin{psmallmatrix} 1 & 1 \\ 1 & 0 \end{psmallmatrix}$ would lift to a cubulation with two cubes of its orientable double covering, which is the torus bundle with monodromy $\begin{psmallmatrix} 2 & 1 \\ 1 & 1 \end{psmallmatrix}$, but this manifold does not appear in the list of~\cite{Korablev-Kazakov:cubic_complexity_two}.
\end{remark}

\begin{small}

\end{small}

\end{document}